\def\Ddots{\mathinner{\mkern1mu\raise\p@
\vbox{\kern7\p@\hbox{.}}\mkern2mu
\raise4\p@\hbox{.}\mkern2mu\raise7\p@\hbox{.}\mkern1mu}}
\newcommand{\agcd}[2]{\ensuremath{\mathrm{agcd}_{\rho}^{\sigma}(#1,#2)}}
\def\mp{MAPLE }
\newcommand*{\QEDB}{\hfill\ensuremath{\square}}
\begin{document}
\title{Approximate \texttt{GCD} in a Bernstein basis}
%
%\titlerunning{Abbreviated paper title}
% If the paper title is too long for the running head, you can set
% an abbreviated paper title here
%
\author{Robert M. Corless \\
Leili Rafiee Sevyeri}
\authorrunning{Robert M. Corless, Leili Rafiee Sevyeri}
% First names are abbreviated in the running head.
% If there are more than two authors, 'et al.' is used.
%
\institute{Ontario Research Centre for Computer Algebra, School of Mathematical and Statistical Sciences, Western University.\\
\and
\email{rcorless@uwo.ca}\\
\and 
\institute{Ontario Research Centre for Computer Algebra, School of Mathematical and Statistical Sciences, Western University. \\
 \and
\email{rcorless@uwo.ca\\ lrafiees@uwo.ca}
%\url{http://www.springer.com/gp/computer-science/lncs} \and
%ABC Institute, Rupert-Karls-University Heidelberg, Heidelberg, Germany\\
%\email{\{abc,lncs\}@uni-heidelberg.de}
}}
\maketitle              % typeset the header of the contribution
\begin{abstract}
We adapt Victor Y.~Pan's root-based algorithm for finding approximate \texttt{GCD} to the case where the polynomials are expressed in  Bernstein bases. We use the numerically stable companion pencil of \text{Gu{\dh}bj{\"o}rn} \text{J{\'o}nsson} to compute the roots, and the Hopcroft-Karp bipartite matching method to find the degree of the approximate \texttt{GCD}. We offer some refinements to improve the process.

\keywords{Bernstein basis \and Approximate \texttt{GCD} \and Maximum matching \and Bipartite graph \and Root clustering \and Companion pencil.}
\end{abstract}
\section{Introduction}
In general, finding the Greatest Common Divisor (\texttt{GCD}) of two exactly-known univariate polynomials is a well understood problem. However, it is also known that the \texttt{GCD} problem for \textit{noisy} polynomials (polynomials with errors in their coefficients) is an ill-posed problem. More precisely, a small error in coefficients of polynomials $P$ and $Q$ with a non-trivial \texttt{GCD} generically leads to a trivial \texttt{GCD}. As an example of such situation, suppose $P$ and $Q$ are non constant polynomials such that $P \vert Q$, then $\gcd(P,Q)= P$. Now for any $\epsilon>0$, $\gcd(P, Q+\epsilon) $ is a constant, since if $\gcd(P, Q+\epsilon) = g$, then $g \vert Q+\epsilon -Q = \epsilon$. This clearly shows that the \texttt{GCD} problem is an ill-posed one. We note that the choice of basis makes no difference to this difficulty.

At this point we have a good motivation to define something which can play a similar role to the \texttt{GCD} of two given polynomials 
which is instead well-conditioned. The idea is to define an \textit{approximate \texttt{GCD}}~\cite{botting2005using}. There are various definitions for approximate \texttt{GCD} which are used by different authors. All these definitions respect ``closeness'' and ``divisibility'' in some sense.

In this paper an approximate \texttt{GCD} of a pair of polynomials $P$ and $Q$ is the exact \texttt{GCD} of a corresponding pair of polynomials $\tilde{P}$ and $\tilde{Q}$ where $P$ and $\tilde{P}$ are ``close'' with respect to a specific metric, and similarly for $Q$ and $\tilde{Q}$ (see Definition \ref{def:approximategcd}).

Finding the \texttt{GCD} of two given polynomials is an elementary operation needed for many algebraic computations. Although in most applications the polynomials are given in the power basis, there are cases where the input is given in other bases such as the Bernstein basis. One important example of such a problem is finding intersection points of B\'ezier curves, which is usually presented in a Bernstein basis. For computing the intersections of B\'ezier curves and surfaces the Bernstein resultant and \texttt{GCD} in the Bernstein basis comes in handy (see~\cite{bini2006computing}).

One way to deal with polynomials in Bernstein bases is to convert them into the power basis. In practice poor stability of conversion from one basis to another and poor conditioning of the power basis essentially cancel the benefit one might get by using conversion to the simpler basis (see~\cite{FaroukiRajan}).

The Bernstein basis is an interesting one for various algebraic computations, for instance, see~\cite{mackey2016linearizations},~\cite{Victorpanmatching}.
There are many interesting results in approximate \texttt{GCD} including but not limited to~\cite{george1},~\cite{botting2005using},~\cite{Beckermann2018},~\cite{AAA},~\cite{zeng2011numerical},~\cite{FaroukiGoodman},~\cite{KarmarkarLkshman},~\cite{karmarkar1998approximate} and~\cite{kaltofen2007structured}. In~\cite{zhonggangAGCD}, the author has introduced a modification of the algorithm given by Corless, Gianni, Trager and Watt in~\cite{CorlessSVD}, to compute the approximate \texttt{GCD} in the power basis.

Winkler and Yang in~\cite{winkler} give an estimate of the degree of an approximate \texttt{GCD} of two polynomials in a Bernstein basis. Their approach is based on computations using 
resultant matrices. More precisely, they use the singular value decomposition of Sylvester and B\'ezout resultant matrices. We do not follow the approach of Winkler and Yang here, because they essentially convert to a power basis. 
%Moreover,
%we try to find an approximate \texttt{GCD} alongside with a pair of polynomials where our approximate \texttt{GCD} becomes their exact \texttt{GCD}. 
Owing to the difference of results we do not give a comparison of our algorithm with the results of~\cite{winkler}.

Our approach is mainly to follow the ideas introduced by Victor Y.~Pan in~\cite{Victorpanmatching}, working in the power basis. In distinction to the other known algorithms for approximate \texttt{GCD}, Pan's method does not algebraically compute a degree for an approximate \texttt{GCD} first. Instead it works in a reverse way. In~\cite{Victorpanmatching} the author assumes the roots of polynomials $P$ and $Q$ are given as inputs. Having the roots in hand the 
algorithm generates a bipartite graph where one set of nodes contains the roots of $P$ and the other contains the roots of $Q$.
The criterion for defining the set of edges is based on Euclidean distances of roots. When the graph is defined completely, a 
matching algorithm will be applied. Using the obtained matching, a polynomial $D$ with roots as averages 
of paired close roots will be produced which is considered to be an approximate \texttt{GCD}. The last step is to use the roots of $D$ to replace the corresponding roots in $P$ and $Q$ to get $\tilde{P}$ and $\tilde{Q}$ as close polynomials.

In this paper we introduce an algorithm for computing approximate \texttt{GCD} in the Bernstein basis which relies on the above idea. For us the inputs are the coefficient vectors of $P$ and $Q$. We use the correspondence between the roots of a polynomial $f$ in a Bernstein basis and generalized eigenvalues of a corresponding matrix pencil $(A_f,B_f)$. This idea for finding the roots of $f$ was first used in~\cite{jonssonthesis}. Then by finding the generalized eigenvalues we get the roots of $P$ and $Q$ (see~\cite[Section $2.3$]{jonssonthesis}). Using the roots and similar methods to~\cite{Victorpanmatching}, we form a bipartite graph
and then we apply the maximum matching algorithm by Hopcroft and Karp~\cite{hopkroft} to get a maximum matching. Having
the matching, the algorithm forms a polynomial which is considered as an approximate \texttt{GCD} of $P$ and $Q$. The 
last step is to construct $\tilde{P}$ and $\tilde{Q}$ for which we apply a generalization of the method used in~\cite[Example 6.10]{corless2013} (see Section \ref{sec:apxpoly}). 

Note that our algorithm, like that of Victor Y.~Pan, does almost the reverse of the well-known algorithms for approximate \texttt{GCD}. Usually the 
algebraic methods do not try to find the roots. 
%The method which starts from the roots of the given polynomials was 
%used by Victor Y.~Pan in~\cite{Victorpanmatching}. 
In~\cite{Victorpanmatching} Pan assumes the polynomials are represented by their roots. In our case we do not start with this assumption. Instead, by computing the roots we can then apply Pan's method.

The second section of this paper is provided some background for concrete computations with polynomials 
in Bernstein bases which is needed for our purposes. The third section present a method to construct a corresponding pair of
polynomials to a given pair $(P,Q)$. More precisely, this section generalizes the method mentioned in~\cite[Example 6.10]{corless2013} 
(which is introduced for power basis) in the Bernstein basis. The fourth section introduces a new algorithm for finding an
approximate \texttt{GCD}. In the final section we present numerical results based on our method.
\section{Preliminaries}\label{sec:preliminaries}
The Bernstein polynomials on the interval $0 \leq x \leq 1$ are defined as
\begin{equation}
B_{k}^{n}(x) = {n \choose k} x^k(1-x)^{n-k}
\end{equation}
for $k=0,\ldots,n$, where the binomial coefficient is as usual
\begin{equation}
{n\choose k}= \dfrac{n!}{k!(n-k)!} \>.
\end{equation}
More generally, in the interval $a \leq x \leq b$ (where $a < b$) we define
\begin{equation}
B_{a,b,k}^{n}(x) := {n \choose k} \dfrac{(x-a)^k(b-x)^{n-k}}{(b-a)^n}\>.
\end{equation}
When there is no risk of confusion we may simply write $B_k^{n}$'s for the $0 \leq x \leq 1$ case.
We suppose henceforth that $P(x)$ and $Q(x)$ are given in a Bernstein basis. 

There are various definitions for approximate \texttt{GCD}. The main idea behind all of them is to find ``interesting'' polynomials $\tilde{P}$ and $\tilde{Q}$ close to $P$ and $Q$ and use $\gcd(\tilde{P},\tilde{Q})$ as the approximate \texttt{GCD} of $P$ and $Q$. However, there are multiple ways of defining both ``interest'' and ``closeness''. To be more formal, consider the following weighted norm, for a vector $v$
\begin{equation}\label{eq:norm}
\left\|v\right \|_{\alpha,r} = \left( \sum_{k=1}^n \left| {\alpha_k v_k} \right|^r \right)^{1/r}
\end{equation}
for a given weight vector $\alpha\neq 0$ and a positive integer $r$ or $\infty$.
The map $\rho(u,v)= \left \| u-v \right\|_{\alpha,r}$ is a metric and we use this metric to compare the coefficient vectors of $P$ and $Q$.

In this paper we define an approximate \texttt{GCD} using the above metric or indeed any fixed semimetric. More precisely, we define the \textit{pseudogcd} set for the pair $P$ and $Q$ as
$$
A_{\rho} = \big\lbrace g(x) \;\; | \;\;  \exists \tilde{P}, \tilde{Q}\;\; \text{with}\;\; \rho(P,\tilde{P})\leq \sigma, \rho(Q,\tilde{Q}) \leq \sigma \;\; \text{and}\;\; g(x) = \gcd(\tilde{P},\tilde{Q}) \big\rbrace \>.
$$
Let 
\begin{equation}
d = \max_{\substack{g \in A_{\rho}}} \deg(g(x))\>.
\end{equation}
\begin{definition}\label{def:approximategcd}
An approximate \texttt{GCD} for $P,Q$ which is denoted by $\agcd{P}{Q}$, is $G(x) \in A_{\rho}$ where $\deg(G) = d$ and $\rho(P,\tilde{P})$ and $\rho(Q,\tilde{Q})$ are simultaneously minimal in some sense. For definiteness, we suppose that the maximum of these two quantities is minimized.   
\end{definition} 

In Section \ref{sec:RMP} we will define another (semi) metric, that uses roots. In Section \ref{sec:compappGCD} we will see that the parameter $\sigma$ helps us to find approximate polynomials such that the common roots of $\tilde{P}$ and $\tilde{Q}$ have at most distance (for a specific metric) $\sigma$ to the associated roots of $P$ and $Q$ (see Section \ref{sec:compappGCD} for more details).
\subsection{Finding roots of a polynomial in a Bernstein basis} \label{companion pencil}
In this section we recount the numerically stable method introduced by \text{Gu{\dh}bj{\"o}rn} \\
\text{J{\'o}nsson} for finding roots of a given polynomial in a Bernstein basis.
We only state the method without discussing in detail its stability and we refer the reader to~\cite{jonssonthesis} and~\cite{jonsson2004solving} for more details.

Consider a polynomial 
\begin{equation}
P(x)=\sum_{i=0}^n a_iB_i^{n}(x)
\end{equation}
in a Bernstein basis where the $a_i$'s are real scalars. We want to find the roots of $P(x)$ by constructing its companion pencil. In~\cite{jonssonthesis} J{\'o}nsson showed that this problem is equivalent to solving the following generalized eigenvalue problem.
That is, the roots of $P(x)$ are the generalized eigenvalues of the corresponding companion pencil to the pair
\begin{small}
\[
\textbf{A}_P= \left[ \begin{matrix}
-a_{n-1} & -a_{n-2} & \cdots & -a_1 & -a_0 \\
\\
1 & 0 & & \\
\\
& 1 & 0 & &\\
\\
&  & \ddots & \ddots & \\
\\
& & & 1 & 0\\ 
\end{matrix}
\right]   
, \,\,\,
\textbf{B}_P= \left[ \begin{matrix}
-a_{n-1}+\frac{a_n}{n} & -a_{n-2} & \cdots & -a_1 & -a_0 \\
\\
1 & \frac{2}{n-1} & & \\
\\
& 1 & \frac{3}{n-2} & &\\
\\
&  & \ddots & \ddots & \\
\\
& & & 1 & \frac{n}{1}\\ 
\end{matrix}
\right].
\]
\end{small}
That is, $P(x)=\det(x\textbf{B}_P-\textbf{A}_P)$. In~\cite{jonssonthesis}, the author showed that the above method is numerically stable. \\
\begin{theorem}~\cite[Section $2.3$]{jonssonthesis} \label{thm:jonsson}
Assume $P(x), \textbf{A}_P$ and $\textbf{B}_P$ are defined as above. $z$ is a root of $P(x)$ if and only if it is a generalized eigenvalue for the pair $(\textbf{A}_P,\textbf{B}_P)$.
\end{theorem}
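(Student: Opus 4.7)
The plan is to establish the identity $P(x) = \det(x\mathbf{B}_P - \mathbf{A}_P)$ directly, after which the theorem follows immediately from the definition of a generalized eigenvalue: $z$ is a generalized eigenvalue of $(\mathbf{A}_P,\mathbf{B}_P)$ iff $\det(z\mathbf{B}_P - \mathbf{A}_P) = 0$ iff $P(z) = 0$. So the real work is the determinantal identity.

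First I would write down the pencil $M(x) := x\mathbf{B}_P - \mathbf{A}_P$ row by row. Rows $2,\ldots,n$ of $M(x)$ are lower bidiagonal: in row $i$ the only nonzero entries are $x-1$ at column $i-1$ (since both $\mathbf{A}_P$ and $\mathbf{B}_P$ have a $1$ there) and $x\cdot i/(n-i+1)$ at column $i$ (the diagonal entry comes purely from $\mathbf{B}_P$, since $\mathbf{A}_P$ has a $0$ there). The first row is $M(x)_{1,j} = x(\mathbf{B}_P)_{1,j} - (\mathbf{A}_P)_{1,j}$, which for $j\geq 2$ equals $a_{n-j}(1-x)$, and for $j=1$ equals $a_{n-1}(1-x) + \frac{x\, a_n}{n}$, the extra term $xa_n/n$ accounting for the correction in $(\mathbf{B}_P)_{1,1}$.

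Next I would compute the minor $\det M^{(j)}$ obtained by deleting row $1$ and column $j$ of $M(x)$. Because rows $2,\ldots,n$ of $M(x)$ have their nonzero entries confined to two adjacent columns, deletion of column $j$ splits the remaining matrix into a block-diagonal form: an upper-triangular $(j-1)\times(j-1)$ block (rows $2,\ldots,j$ against columns $1,\ldots,j-1$) whose diagonal entries are all $x-1$, and a lower-triangular $(n-j)\times(n-j)$ block (rows $j+1,\ldots,n$ against columns $j+1,\ldots,n$) whose diagonal entries are $x\cdot i/(n-i+1)$ for $i = j+1,\ldots,n$. Multiplying out the second block telescopes to $x^{n-j}\binom{n}{j}$, so $\det M^{(j)} = (x-1)^{j-1} x^{n-j} \binom{n}{j}$. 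Combining with the sign $(-1)^{1+j}$ from the cofactor expansion gives cofactor $(1-x)^{j-1} x^{n-j}\binom{n}{j}$; the edge cases $j=1,n$ are handled by the conventions that empty products equal $1$.

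Finally I would expand $\det M(x)$ along the first row and rearrange. For $j\geq 2$ the contribution is $a_{n-j}(1-x)^{j}x^{n-j}\binom{n}{j} = a_{n-j}B_{n-j}^{n}(x)$, accounting for terms $a_0,\ldots,a_{n-2}$. The $j=1$ contribution splits into two pieces: $a_{n-1}(1-x)\cdot nx^{n-1} = a_{n-1}B_{n-1}^{n}(x)$ and $\tfrac{xa_n}{n}\cdot nx^{n-1} = a_n B_n^n(x)$, supplying the missing two Bernstein terms. Summing gives $\det M(x) = \sum_{i=0}^{n} a_i B_i^{n}(x) = P(x)$, completing the proof.

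The main obstacle I anticipate is purely bookkeeping: tracking the index reversal $i\leftrightarrow n-j$, the signs $(-1)^{1+j}(x-1)^{j-1} = (1-x)^{j-1}$, and the fact that the anomalous $(1,1)$ entry is precisely tuned so that its two pieces land on the last two Bernstein basis functions $B_{n-1}^n$ and $B_n^n$. Once these are lined up the identification with the Bernstein expansion is automatic.
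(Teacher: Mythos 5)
Your proposal is correct, but it takes a genuinely different route from the paper. You prove the determinantal identity $\det(x\mathbf{B}_P-\mathbf{A}_P)=P(x)$ by cofactor expansion along the first row, and your bookkeeping checks out: the minor obtained by deleting row $1$ and column $j$ does factor into an upper-triangular block with diagonal $x-1$ and a lower-bidiagonal block whose diagonal product telescopes to $x^{n-j}\binom{n}{j}$, and the sign $(-1)^{1+j}(x-1)^{j-1}=(1-x)^{j-1}$ together with the index reversal $i=n-j$ reassembles the Bernstein expansion, the anomalous $(1,1)$ entry supplying $a_{n-1}B_{n-1}^n+a_nB_n^n$. The paper instead never proves the determinant identity (it only asserts it); its proof multiplies $z\mathbf{B}_P-\mathbf{A}_P$ by the explicit vector with entries $B_{k}^{n}(z)/(1-z)$, $k=n-1,\dots,0$, and shows every entry of the product vanishes except the first, which equals $P(z)$, so that vector is a generalized eigenvector precisely when $z$ is a root. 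Comparing the two: your determinant argument is uniform in $z$ (no $1/(1-z)$ factors, hence no special care needed at $z=1$), it substantiates the claim $P(x)=\det(x\mathbf{B}_P-\mathbf{A}_P)$ that the paper relies on, and it makes the converse direction (generalized eigenvalue $\Rightarrow$ root) immediate from regularity of the pencil when $P\not\equiv 0$; the paper's eigenvector computation buys an explicit, structurally meaningful eigenvector of Bernstein values, which is useful for stability and backward-error considerations, at the cost of the $z\neq 1$ caveat and of leaving the determinant identity unproved.
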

\begin{proof}
We show
\begin{equation}
P(z)=0 \;\;\; \Leftrightarrow \;\;\; (z\textbf{B}_P-\textbf{A}_P)\left[ \begin{matrix}
B_{n-1}^n(z)(\frac{1}{1-z})\\
\vdots \\
B_{1}^n(z)(\frac{1}{1-z})\\
B_{0}^n(z)(\frac{1}{1-z})\\ 
\end{matrix}\right]=0 \>.
\end{equation}
We will show that all the entries of 
\begin{equation}\label{eq:genEigen}
(z\textbf{B}_P-\textbf{A}_P)\left[ \begin{matrix}
B_{n-1}^n(z)(\frac{1}{1-z})\\
\vdots \\
B_{1}^n(z)(\frac{1}{1-z})\\
B_{0}^n(z)(\frac{1}{1-z})\\ 
\end{matrix}\right]
\end{equation}
are zero except for possibly the first entry:
\begin{equation}\label{eq: eq7}
(z-1)B_1^n(z)(\frac{1}{z-1}) + nzB_0^n(z)(\frac{1}{z-1}) 
\end{equation}
since $B_1^n(z)=nz(1-z)^{n-1}$ and $B_0^n(z)=z^0(1-z)^n$ if $n \geq 1$, so equation \eqref{eq: eq7} can be written as
\begin{equation}
-nz(1-z)^{n-1} + nz\dfrac{(1-z)^n}{(1-z)} = -nz(1-z)^{n-1} + nz(1-z)^{n-1}=0 \>.
\end{equation}
Now for $k$-th entry:
\begin{equation}\label{eq: eq9}
\dfrac{(z-1)}{(1-z)}B_{n-k}^n(z) + \dfrac{k+1}{n-k}\dfrac{z}{(1-z)}B_{n-k-1}^n(z)
\end{equation}
Again we can replace $B_{n-k}^n(z)$ and $B_{n-k-1}^n(z)$ by their definitions. We find that equation \eqref{eq: eq9} can be written as
\begin{multline}
\dfrac{(z-1)}{(1-z)}{n \choose n-k}z^{n-k}(1-z)^{n-n+k} + \dfrac{k+1}{n-k}\dfrac{z}{(1-z)}{n \choose n-(k+1)} z^{\tiny {n-k-1}}(1-z)^{\tiny {n-(n-k-1)}} \\
 = -{n \choose n-k} z^{n-k}(1-z)^k + \dfrac{k+1}{n-k} \dfrac{n!}{(n-(k+1))!(k+1)!}z^{n-k}(1-z)^k \\
= \dfrac{n!}{(n-k)!k!}z^{n-k}(1-z)^k + \dfrac{n!}{(n-k)(n-(k+1))!k!} z^{n-k}(1-z)^k =0 \>.
\end{multline}
Finally, the first entry of equation \eqref{eq:genEigen} is
\begin{equation} \label{eq: eq11}
\dfrac{za_n}{n(1-z)}B_{n-1}^n(z) + \dfrac{a_{n-1}(z-1)}{(1-z)}B_{n-1}^n(z) + \sum_{i=0}^{n-2}a_iB_{n-1}^n(z)
\end{equation}
In order to simplify the equation \eqref{eq: eq11}, we use the definition of $B_{n-1}^n(z)$ as follows:
\begin{equation}
\dfrac{za_n}{n(1-z)}B_{n-1}^n(z) = \dfrac{z}{n}{n \choose n-1} z^{n-1}\dfrac{1-z}{1-z}
= \dfrac{z^n a_n}{n}{n \choose n-1}
= a_nB_n^n(z)
\end{equation}
So the equation \eqref{eq: eq11} can be written as:
\begin{equation}
a_nB_n^n(z) + (a_{n-1})B_{n-1}^n(z) + \sum_{i=0}^{n-2}a_iB_{n-1}^n(z)
\end{equation}
This is just $P(z)$ and so 
\begin{equation}
(z\textbf{B}_P-\textbf{A}_P)\left[ \begin{matrix}
B_{n-1}^n(z)(\frac{1}{1-z})\\
\vdots \\
B_{1}^n(z)(\frac{1}{1-z})\\
B_{0}^n(z)(\frac{1}{1-z})\\ 
\end{matrix}\right]=
\left[ \begin{matrix}
0\\
\vdots \\
0\\
0\\ 
\end{matrix}\right]
\end{equation}
if and only if $P(z)=0$.
\begin{flushright}
\QEDB
\end{flushright}
\end{proof}
This pencil (or rather its transpose) has been implemented in \mp since $2004$.

\begin{example}
Suppose $P(x)$ is given by its list of coefficients
\begin{equation}
[ 42.336, 23.058, 11.730, 5.377, 2.024]
\end{equation}
Then by using Theorem \ref{thm:jonsson}, we can find the roots of $P(x)$ by finding the eigenvalues of its corresponding companion pencil namely:
\begin{equation}
 \textbf{A}_P:= \left[ \begin {array}{cccc} -5.377& -11.73 &- 23.058&- 42.336
\\ \noalign{\medskip}1&0&0&0\\ \noalign{\medskip}0&1&0&0
\\ \noalign{\medskip}0&0&1&0\end {array} \right]
\end{equation}
and
\begin{equation}
\textbf{B}_P:= \left[ 
\begin {array}{cccc} - 4.871&- 11.73&-23.058&
- 42.336 \\ \noalign{\medskip} 1&
 .6666666666& 0& 0\\ \noalign{\medskip} 0& 1&
 1.5& 0\\ \noalign{\medskip} 0& 0& 1& 4
\end {array} \right] 
\end{equation}
Now if we solve the generalized eigenvalue problem using \mp for pair of $(\textbf{A}_P,\textbf{B}_P)$ we get:
\begin{equation}
\left[ \begin {array}{cccc}  5.59999999999989,  3.00000000000002, 2.1, 1.2 \end {array} \right] 
\end{equation}
Computing residuals, we have exactly\footnote{In some sense the exactness is accidental; the computed residual is itself subject to rounding errors. See~\cite{corless2013} for a backward error explanation of how this can happen.} $P(1.2)=0$, $P(2.1)=0$, $P(3)=0$, and $P(5.6)=0$ using de Casteljau's algorithm (see Section \ref{sec:deCasteljau}).
\end{example}
\subsection{Clustering the roots}
In this brief section we discuss the problem of having multiplicities greater than $1$ for roots of our polynomials. Since we are dealing 
with approximate roots, for an specific root $r$ of multiplicity $m$, we get $r_1, \ldots , r_m$ where $\vert r - r_i \vert \leq \sigma$ 
for $\sigma \geq 0$. Our goal in this section is to recognize the \textit{cluster}, $\lbrace r_1, \ldots , r_m \rbrace$, for a root $r$ 
as $\tilde{r}^m$ where $\vert \tilde{r}-r\vert \leq \sigma$ in a constructive way.\\
Assume a polynomial $f$ is given by its roots as $f(x) = \prod_{i = 1}^n (x -r_i)$. Our goal is to write $f(x) = \prod_{i = 1}^s (x -
t_i)^{d_i}$ such that $(x-t_i) \nmid f(x)/(x-t_i)^{d_i}.$ In other words, $d_i$'s are multiplicities of $t_i$'s. In order to do so we need a 
parameter $\sigma$ to compare the roots. If $\vert r_i -r_j \vert \leq \sigma$ then we replace both $r_i$ and $r_j$ with their average.

For our purposes, even the naive method, \textsl{i.e.} computing distances of all roots, works. This idea is presented as Algorithm \ref{alg:ClusterRoots}. It is worth mentioning that for practical purposes a slightly better way might be a modification of the well known divide and conquer algorithm for solving the closest pair problem in plane ~\cite[Section 33.4]{cormen2001introduction}.
\begin{algorithm}[H]
\begin{algorithmic}
\caption{\texttt{ClusterRoots}$(P, \sigma)$}
\label{alg:ClusterRoots}
\REQUIRE $P$ is a list of roots
\ENSURE $[(\alpha_1,d_1), \ldots, (\alpha_m,d_m)]$ where $\alpha_i$ is considered as a root with multiplicity $d_i$
\STATE $temp \gets Empty List$
\STATE $ C \gets Empty List$
\STATE $p \gets \texttt{size}(P)$ 
\STATE $i \gets 1$ 
\STATE while $i \leq p$ do\\
\hspace{.5 cm} $\texttt{append}(temp,P[i])$\\
\hspace{.5 cm} $j \gets i+1$\\
\hspace{.5 cm} while $j \leq p$ do\\
\hspace{1 cm} if $\vert P[i] -P[j] \vert \leq s$ then\\
\hspace{1.5 cm} $\texttt{append}(temp, P[j])$\\
\hspace{1.5 cm} $\texttt{remove}(P,j)$\\
\hspace{1.5 cm} $p \gets p-1$\\
\hspace{1 cm} else\\
\hspace{1.5 cm} $j \gets j+1$\\
\hspace{.5 cm} $i \gets i+1$\\
\hspace{.5 cm} $\texttt{append}(C,[\texttt{Mean}(temp),\texttt{size}(temp)])$\\
return $C$
\end{algorithmic}
\end{algorithm}
\subsection{The root marriage problem}\label{sec:RMP}
The goal of this section is to provide an algorithmic solution for solving the following problem:

\textit{\textbf{The Root Marriage Problem (RMP)}}: Assume $P$ and $Q$ are polynomials given by their roots. 
For a given $\sigma > 0$, for each root $r$ of $P$, (if it is possible) find a unique root of $Q$, say $s$, such that $\vert r-s \vert \leq \sigma$.

A solution to the RMP can be achieved by means of graph theory algorithms. We recall that a maximum matching for a bipartite graph $(V,E)$,
is $M \subseteq E$ with two properties:
\begin{itemize}
\item every node $v \in V$ appears as an end point of an edge in $E'$ at most once.
\item $E'$ has the maximum size among the subsets of $E$ satisfying the previous condition.
\end{itemize}

We invite the reader to consult~\cite{BondyMurty} and~\cite{West} for more details on maximum matching.

There are various algorithms for solving the maximum matching problem in a graph. Micali and Vazirani's matching algorithm is probably the 
most well-known. However there are more specific algorithms for different classes of graphs. In this paper, as in~\cite{Victorpanmatching}, we use the Hopcroft-Karp algorithm for
solving the maximum matching problem in a bipartite graph which has a complexity of $O((m+n)\sqrt{n})$ operations. 

Now we have enough tools for solving the RMP. The idea is to reduce the RMP to a maximum matching problem. In order to do so we have to 
associate a bipartite graph to a pair of polynomials $P$ and $Q$. For a positive real number $\sigma$, let $G^{\sigma}_{P,Q} = 
(G^{\sigma}_P \cup G^{\sigma}_Q, E^{\sigma}_{P,Q})$ where
\begin{itemize}
\item $G_P= \texttt{ClusterRoots}(\text{the set of roots of}\; P, \sigma)$,
\item $G_Q= \texttt{ClusterRoots}(\text{the set of roots of}\; Q, \sigma)$,
\item $E^{\sigma}_{P,Q} = \Big \lbrace \left(\{r,s\},\; \min(d_t,d_s) \right): \; r \in G_P, \; \text{with multiplicity }\; d_r, \; s \in G_Q \\ \;\; \;\;\;\;\;\;\;\; \;\;\;\;  \text{with multiplicity} \;\; d_s,\; \Big\vert r[1] -s[1] \Big\vert \leq \sigma \Big \rbrace$
\end{itemize}

Assuming we have access to the roots of polynomials, it is not hard to see that there is a naive algorithm to construct $G^{\sigma}_{P,Q}$ 
for a given $\sigma > 0$. Indeed it can be done by performing $O(n^2)$ operations to check the distances of roots where $n$ is the larger
degree of the given pair of polynomials. 

The last step to solve the RMP is to apply the Hopcroft-Karp algorithm on $G^{\sigma}_{P,Q}$ to get a maximum matching. The complexity of this 
algorithm is $O(n^{\frac{5}{2}})$ which is the dominant term in the total cost. Hence we can solve RMP in time $O(n^{\frac{5}{2}})$.

As was stated in Section \ref{sec:preliminaries}, we present a semi-metric which works with polynomial roots in this section. For
two polynomials $R$ and $S$, assume $m \leq n$ and $\lbrace r_1, \ldots , r_m \rbrace$ and $\lbrace t_1, \ldots, t_n \rbrace$ are respectively the sets of roots of $R$ and $T$. Moreover assume $S_n$ is the set of all permutations of $\lbrace 1, \ldots , n \rbrace$.
We define $$\rho(R,T) = \min_{\substack{\tau \in S_n}} \parallel [r_1 -t_{\tau(1)}, \ldots , r_m-t_{\tau(m)}] \parallel_{\alpha,r},$$
where $\alpha$ and $r$ are as before.
\begin{remark}
The cost of computing this semi-metric by this definition is $O(n!)$, and therefore prohibitive. However, once a matching has been found then
$$\rho(R, T)= \parallel [r_1 - s_{\text{match}(1)}, r_2 - s_{\text{match}(2)}, \ldots, r_m - s_{\text{match}(m)} ]\parallel _{\alpha , r}$$
where the notation $s_{\text{match}(k)}$ indicates the root found by the matching algorithm that matches $r_k$.
\end{remark}
\subsection{de Casteljau's Algorithm}\label{sec:deCasteljau}
Another component of our algorithm is a method which enables us to evaluate a given polynomial in a Bernstein basis at a given point. There are various methods for doing that. One of the most popular algorithms, for its convenience in Computer Aided Geometric Design (CAGD) applications and its numerical stability~\cite{farouki1987numerical}, is de Casteljau's algorithm which for convenience here is presented as Algorithm \ref{alg:de Cast}.
\begin{algorithm}
	\caption{\texttt{de Casteljau's Algorithm}}
	\label{alg:de Cast}
	\begin{algorithmic}[1]
		\REQUIRE C: a list of coefficients of a polynomial $P(x)$ of degree $n$ in a Bernstein basis of size $n+1$ \\
		\hspace{0.5cm} $\alpha$: a point
		\ENSURE $P(\alpha)$	\\
		\STATE $c_{0,j}$ $\gets$ $C_j$  for $j = 0 \ldots n.$
	    \STATE recursively define\\
	     $c_{i,j}$ $\gets$ $(1-\alpha)\cdot c_{i-1,j} + \alpha \cdot c_{i-1,j+1} $.\\
	     for $i = 1 \ldots n$ and $j = 1 \ldots n-i$.
	    \STATE return $c_{n,0}$.
	 	\end{algorithmic}
\end{algorithm}

We note that the above algorithm uses $O(n^2)$ operations for computing $P(\alpha)$. In contrast, Horner's algorithm for the power basis, Taylor polynomials, or the Newton basis, and the Clenshaw algorithm for orthogonal polynomials, and the barycentric forms\footnote{Assuming that the barycentric weights are precomputed.} for Lagrange and Hermite interpolational basis cost $O(n)$ operations.
\section{Computing Approximate Polynomials}\label{sec:apxpoly}
This section is a generalization of~\cite[Example 6.10]{corless2013} in Bernstein bases. The idea behind the algorithm is to create a linear system from coefficients of a given polynomial and the values of the polynomial at the approximate roots.

Now assume 
\begin{equation}
P(x) = \sum_{i=0}^n p_i B_{i}^{n}(x)
\end{equation}
is given with $\alpha_1, \ldots, \alpha_t$ as its approximate roots with multiplicities $d_i$. Our aim is to find 
\begin{equation}
\tilde{P}(x) = (P+\Delta P)(x)
\end{equation}
where 
\begin{equation}
\Delta P(x)= \sum_{i=0}^n (\Delta p_i)B_{i}^{n}(x)
\end{equation}
so that the set $\lbrace \alpha_1, \ldots , \alpha_t \rbrace$ appears as exact roots of $\tilde{P}$ with multiplicities $d_i$ respectively. On the other hand, we do want to have some
control on the coefficients in the sense that the new coefficients are related to the previous ones. Defining $\Delta p_i =
p_i \delta p_i $ (which assumes $p_i$'s are non-zero) yields 
\begin{equation}
\tilde{P}(x) = \sum_{i=0}^n (p_i + p_i \delta p_i) B_{i}^{n}(x)
\end{equation}

Representing $P$ as above, we want to find $\lbrace \delta p_i \rbrace_{i = 0}^n$. It is worth mentioning that with our assumptions, since perturbations of each coefficient, $p_i$ of $P$ are proportional to itself, if $p_i =0$ then $\Delta p_i = 0$. In other words we have assumed zero coefficients in $P$ will not be perturbed.

In order to satisfy the conditions of our problem we have 
\begin{equation}
\tilde{P}(\alpha_j) = \sum_{i=0}^n (p_i + p_i \delta p_i) B_{i}^{n}(\alpha_j)=0 \>,
\end{equation}
for $j = 1, \ldots , t$. Hence 
\begin{equation}
\tilde{P}(\alpha_j) = \sum_{i=0}^n p_i B_{i}^{n}(\alpha_j)+ \sum_{i=0}^{n-1} p_i \delta p_i B_{i}^{n}(\alpha_j)=0 \>,
\end{equation}
or equivalently 
\begin{equation}\label{eq:apppoly}
\sum_{i=0}^{n-1} p_i \delta p_i B_{i}^{n}(\alpha_j)=-P(\alpha_j) \>,
\end{equation}

Having the multiplicities, we also want the approximate polynomial
$\tilde{P}$ to respect multiplicities. More precisely, for $\alpha_j$, a root of $P$ of multiplicity $d_j$, we expect that $\alpha_j$ has multiplicity $d_j$
as a root of $\tilde{P}$. As usual we can state this fact by means of derivatives of $\tilde{P}$. We want 
\begin{equation}
\tilde{P}^{(k)}(\alpha_j) = 0 \; \text{for} \; 0 \leq k \leq d
\end{equation}

More precisely, we can use the derivatives of Equation \eqref{eq:apppoly} to write
\begin{equation}\label{eq:apppolyder}
\left( \sum_{i=0}^{n-1} p_i \delta p_i B_{i}^{n}\right)^{(k)}(\alpha_j)=-P^{(k)}(\alpha_j) \>.
\end{equation}

In order to find the derivatives in \eqref{eq:apppolyder}, we can use the differentiation matrix ${\bf D_B}$ in the Bernstein basis which is 
introduced in~\cite{amiraslani2018differentiation}. We note that it is a sparse matrix with only $3$ nonzero elements in each column~\cite[Section 1.4.3]{amiraslani2018differentiation}. 
So for each root $\alpha_i$, we get $d_i$ equations of the type \eqref{eq:apppolyder}. This gives us a linear system in the $\delta p_i$'s.
Solving the above linear system using the Singular Value Decomposition (\texttt{SVD}) one gets the desired solution.

Algorithm \ref{alg:apppoly} gives a numerical solution to the problem. For
an analytic solution for one single root see~\cite{rezvani2005nearest},~\cite{stetter1999nearest},~\cite{hitz1999efficient} and~\cite{hitz1998efficient}.
\begin{algorithm}[H]
\caption{\texttt{Approximate-Polynomial}$(P,L)$}
	\label{alg:apppoly}
	\begin{algorithmic}[1]
	\REQUIRE $P:$ list of coefficients of a polynomial of degree $n$ in a Bernstein basis\\
	\hspace{.5cm} $L:$ list of pairs of roots with their multiplicities.
	\ENSURE $\tilde{P}$ such that for any $(\alpha,d) \in L$, $(x-\alpha)^d \vert \tilde{P}$.
	\STATE $Sys \gets Empty List$
	\STATE $D_B \gets$ Differentiation matrix in the Bernstein basis of size $n+1$
	\STATE $X \gets \begin{bmatrix} x_1 & &\ldots & &x_{n+1} \end{bmatrix}^t$
	\STATE $T \gets \texttt{EntrywiseProduct}(\texttt{Vector}(P) , X)$
	\STATE for $(\alpha,d) \in L$ do\\
	\hspace{.5cm} $A \gets I_{n+1}$ \\
	\hspace{.5cm} for $i$ from 0 to $d-1$ do\\
	\hspace{1cm} $A \gets D_B\cdot A$\\
	\hspace{1cm} $eq \gets \texttt{DeCasteljau}(A\cdot T, \alpha) = -\texttt{DeCasteljau}(A\cdot \texttt{Vector}(P),\alpha)$\\
	\hspace{1cm} \texttt{append}$(Sys, eq)$\\
	\STATE Solve $Sys$ using \texttt{SVD} to get a solution with minimal norm (such as \ref{eq:norm}), and return the result.
	\end{algorithmic}
\end{algorithm}

Although Algorithm \ref{alg:apppoly} is written for one polynomial, in practice we apply it to both $P$ and $Q$ separately with the appropriate
lists of roots with their multiplicities to get $\tilde{P}$ and $\tilde{Q}$. 
\section{Computing Approximate \texttt{GCD}}\label{sec:compappGCD}
Assume the polynomials $P(x) = \sum_{i=0}^n a_iB_i^{n}(x)$ and $Q(x)= \sum_{i=0}^m b_iB_i^{m}(x)$ are given by their lists of coefficients
and suppose $\alpha \geq 0$ and $\sigma >0$ are given. Our goal here is to compute an approximate \texttt{GCD} of $P$ and $Q$ with respect to the given $\sigma$. 
Following Pan~\cite{Victorpanmatching} as mentioned earlier, the idea behind our algorithm is to match the close roots of $P$ and $Q$ and then based on this matching
find approximate polynomials $\tilde{P}$ and $\tilde{Q}$ such that their \texttt{GCD} is easy to compute. The parameter $\sigma$ is our main 
tool for constructing the approximate polynomials. More precisely, $\tilde{P}$ and $\tilde{Q}$ will be constructed such that their roots are respectively approximations of roots of $P$ and $Q$ with $\sigma$ as their error bound. In other words, for any root $x_0$ of $P$, $\tilde{P}$(similarly for $Q$) has a root $\tilde{x}_0$ such that $\vert x_0 - \tilde{x}_0 \vert \leq \sigma$.

For computing approximate \texttt{GCD} we apply graph theory techniques. In fact the parameter $\sigma$ helps us to define a bipartite graph 
as well, which is used to construct the approximate \texttt{GCD} before finding $\tilde{P}$ and $\tilde Q$. 

We can compute an approximate \texttt{GCD} of the pair $P$ and $Q$, which we denote by
$\agcd{P(x)}{Q(x)}$, in the following $5$ steps. \\
\\
\textbf{Step $1$. finding the roots:} Apply the method of Section \ref{companion pencil} to get $X=\left[ x_1,x_2,\ldots,x_n \right]$ , the set of all roots of $P$ and $Y=\left[ y_1,y_2,\ldots,y_m\right]$, the set of all roots of $Q$.  \\
\noindent
\textbf{Step $2$. forming the graph of roots $G_{P,Q}$:}  With the sets $X$ and $Y$ we form a bipartite graph, $G$, similar to~\cite{Victorpanmatching} which depends on parameter $\sigma$ in the following way:\\
If $\left| x_i-y_j \right| \leq 2\sigma$ for $i=1, \ldots, n$ and $j=1,\ldots,m$, then we can store that pair of $x_i$ and $y_j$.
\\
\textbf{Step $3$. find a maximum matching in $G_{P,Q}$:} Apply the Hopcroft-Karp algorithm~\cite{hopkroft} to get a maximum 
matching $\lbrace (x_{i_1},y_{j_1}), \ldots, (x_{i_r},y_{j_r})\rbrace$ where $1 \leq k \leq r$, $i_k\in \lbrace 1,\ldots,n \rbrace$
 and $j_k \in \lbrace 1,\ldots,m \rbrace$. 
 \\
 \\
\textbf{Step $4$. forming the approximate \texttt{GCD}:}
\begin{equation}
\agcd{P(x)}{Q(x)}=\prod_{s=1}^r (x-z_s)^{t_s}
\end{equation}
where $z_s=\dfrac{1}{2}(x_{i_s}+y_{j_s})$ and $t_s$ is the minimum of multiplicities of $x_s$ and $y_s$ for $1\leq s\leq r \>.$\\ \\
\textbf{Step $5$. finding approximate polynomials $\tilde{P}(x)$ and $\tilde{Q}(x)$:} Apply Algorithm \ref{alg:de Cast} with
$\lbrace z_1 , \ldots, z_r, x_{r+1} , \ldots , x_{n} \rbrace $ for $P(x)$ and $\lbrace z_1 , \ldots, z_r, y_{r+1} , \ldots , y_{m} \rbrace $ for $Q(x)$.

For steps $2$ and $3$ one can use the tools provided in Section \ref{sec:RMP}. We also note that the output of the above algorithm is directly related to the parameter $\sigma$ and an inappropriate $\sigma$ may
result in an unexpected result. \\

\section{Numerical Results}
In this section we show small examples of the effectiveness of our algorithm (using an implementation in \mp) with two low degree polynomials in a Bernstein basis, given by their list of coefficients:
\begin{align*}
P:= [ 5.887134, 1.341879, 0.080590, 0.000769,-0.000086]
\end{align*}
and 
\begin{align*}
Q:=[-17.88416,-9.503893,-4.226960,-1.05336]
\end{align*}
defined in \mp using Digits $:=30$ (we have presented the coefficients with fewer than $30$ digits for readability). So $P(x)$ and $Q(x)$ are seen to be 
\begin{align*}
P(x):= & 5.887134\, \left( 1-x \right) ^{4}+ 5.367516\, x \left( 1-x \right) ^{3} \\
 & +0.483544\,{x}^{2} \left( 1-x \right) ^{2}+ 0.003076\,{x}^{3}
 \left( 1-x \right) \\
 & - 0.000086\,{x}^{4}
\end{align*}
and
\begin{align*}
Q(x):= & - 17.88416\, \left( 1-x \right) ^{3}- 28.51168 \,x \left( 1-x \right) ^{2}\\
    & - 12.68088\,{x}^{2}\left( 1-x \right) - 1.05336\,{x}^{3}
\end{align*}

Moreover, the following computations is done using parameter $\sigma = 0.7$, and unweighted norm-$2$ as a simple example 
of Equation \eqref{eq:norm}, with $r =2$ and $\alpha=(1,\ldots,1)$.

Using Theorem \ref{thm:jonsson}, the roots of $P$ are, printed to two decimals for brevity, 
\[
\left[ \begin {array}{cccc}  5.3+ 0.0\,i, &  1.09+ 0.0\,i, &  0.99+ 0.0\,i, &  1.02+ 0.0\,i
\end {array} \right] 
\]

This in turn is passed to \texttt{ClusterRoots} (Algorithm \ref{alg:ClusterRoots}) to get
\[
P_{\texttt{ClusterRoots}}:=[[ 1.036+ 0.0\,i,3],[
 5.3+ 0.0\,i,1]]
\]
where $3$ and $1$ are the multiplicities of the corresponding roots.

Similarly for $Q$ we have:
\[
\left[ \begin {array}{ccc}  1.12+ 0.0\,i,
&  4.99+ 0.0\,i,
& 3.19+ 0.0\,i
\end {array} \right] 
\]
which leads to
\[
Q_{\texttt{ClusterRoots}}:=[[ 3.19+ 0.0\,i,1],[
 4.99+ 0.0\,i,1],[
 1.12+ 0.0\,i,1]]
\]
Again the $1$'s are the multiplicities of the corresponding roots.

Applying the implemented maximum matching algorithm in \mp (see Section \ref{sec:RMP}), a
maximum matching for the clustered sets of roots is 
\begin{align*}
T_{\texttt{MaximumMatching}}:=&[[ \left\{  4.99, 5.30 \right\} ,1], [ \left\{1.03 , 1.12 \right\} ,1]]
\end{align*}

This clearly implies we can define (see Step $4$ of our algorithm in Section \ref{sec:compappGCD})
\begin{align*}
\mathrm{agcd}^{0.7}_{\rho}(P,Q):= &(x-5.145)(x-1.078)
\end{align*}

Now the last step of our algorithm is to compute the approximate polynomials having these roots, namely
$\tilde{P}$ and $\tilde{Q}$. This is done using Algorithm \ref{alg:apppoly} which gives 
\begin{align*}
\tilde{P}:= &[ 6.204827, 1.381210, 0.071293, 0.000777,-0.000086]
\end{align*}
and
\begin{align*}
\tilde{Q}:= &[- 17.202067,- 10.003156,-4.698063,- 0.872077]
\end{align*}

Note that
\begin{align*}
\parallel P- \tilde{P} \parallel_{\alpha, 2}\; \approx 0.32 \leq 0.7 \;\; \text{and} \;\;\parallel Q- \tilde{Q} \parallel_{\alpha, 2} \; \approx 0.68 \leq 0.7
\end{align*}

We remark that in the above computations we used the built-in function \texttt{LeastSquares} in
\mp to solve the linear system to get $\tilde{P}$ and $\tilde{Q}$, instead of using the SVD ourselves. This equivalent method returns a solution to the system which 
is minimal according to norm-$2$. This can be replaced with any other solver which uses \texttt{SVD} to get
a minimal solution with the desired norm.

As the last part of experiments we have tested our algorithm on several random inputs of two polynomials of various degrees. The resulting polynomials $\tilde{P}$ and $\tilde{Q}$ are
compared to $P$ and $Q$ with respect to 2-norm (as a simple example of our weighted norm) and the root semi-metric which is defined in Section \ref{sec:RMP}. Some of the results are displayed in Table~\ref{tab:random}. 
%\begin{small}
%\begin{table}
%\begin{tabular}{|c|c|c|c|c|c|}
%\hline
%max deg & agcd deg & $\parallel P-\tilde{P} \parallel_2$ & $\rho(P,\tilde{P})$ & $\parallel Q - \tilde{Q} \parallel_2$ & $\rho(Q,\tilde{Q})$\\
%\hline
%2 & 1 & 0.00473480006608315018& 0.116190409465551078& 0.0119950112446800105& 0.058203801354007600 \\
%\hline
%4 & 3 & 1.08900638970089703 & 1.04012742628183230 & 0.158804603831700614 & 0.157617078153564155 \\
%\hline
%6& 2& .809232045357519025& .756341894742840193& .210626337650530404& .310730181562008601 \\
%\hline
%7 & 2 & 0.0257302946387520139& 0.0483239793450107175 &  0.123368666006379399 & 0.0267243619861776693\\
%\hline
%10 & 5 & .165979636066638458 & .227378978827199263 & .711905596562587895 & .645935291833224512\\
%\hline
%\end{tabular}
%\end{table}
%\end{small}

\begin{small}
\begin{table}
\centering
\caption{Distance comparison of outputs and inputs of our approximate \texttt{GCD} algorithm on randomly chosen inputs.\label{tab:random}}
\begin{tabular}{|c|c|c|c|c|c|}
\hline 
&&&&&\\
$\max_{\deg}\lbrace P,Q \rbrace$ &  $\deg (agcd_{\rho}^{\sigma}(P,Q))$ & \hspace{.1 cm} $\parallel P-\tilde{P} \parallel_2$ \hspace{.1 cm} & \hspace{.1 cm} $\rho(P,\tilde{P})$ \hspace{.1 cm} & \hspace{.1 cm} $\parallel Q - \tilde{Q} \parallel_2$ \hspace{.1 cm} & \hspace{.1 cm} $\rho(Q,\tilde{Q})$ \hspace{.1 cm} \\
&&&&& \\
\hline
2 & 1 & 0.00473& 0.11619& 0.01199& 0.05820 \\
\hline
4 & 3 & 1.08900 & 1.04012 & 0.15880 & 0.15761 \\
\hline
6& 2& 0.80923& 0.75634& 0.21062& 0.31073 \\
\hline
7 & 2 & 0.02573& 0.04832 &  0.12336 & 0.02672\\
\hline
10 & 5 & 0.165979 & 0.22737 & 0.71190 & 0.64593\\
\hline
\end{tabular}
\end{table}
\end{small}
\section{Concluding remarks}
In this paper we have explored the computation of approximate \texttt{GCD} of polynomials given in a Bernstein basis, by using a method similar to that of Victor Y.~Pan~\cite{Victorpanmatching}. We first use the companion pencil of J{\'o}nsson to find the roots; we cluster the roots as Zeng does to find the so-called pejorative manifold. We then algorithmically match the clustered roots in an attempt to find $\mathrm{agcd}_{\rho}^{\sigma}$ where $\rho$ is the \textit{root distance semi-metric}. We believe that this will give a reasonable solution in the Bernstein coefficient metric; in future work we hope to present analytical results connecting the two.

% ---- Bibliography ----
%
% BibTeX users should specify bibliography style 'splncs04'.
% References will then be sorted and formatted in the correct style.
%
 \bibliographystyle{splncs04}
 %\bibliography{mybibliography}
%

\end{document}